\def\no{\noindent}
\def\pmatrix{\left(\begin{array}}
\def\endpmatrix{\end{array}\right)}
\def\CC{\mathbb{C}}
\def\RR{\mathbb{R}}
\def\I{{\cal I}}
\def\P{{\cal P}}
\def\dd{\mathrm{d}}
\def\ii{\mathrm{i}}
\def\bfb{{\bm{b}}}
\def\bfc{{\bm{c}}}
\def\bfzero{{\bm{0}}}
\def\bfgamma{{\bm{\gamma}}}
\def\bfeta{{\bm{\eta}}}
\def\bfphi{{\bm{\phi}}}
\def\eps{\varepsilon}
\def\aa{\alpha}
\def\cpr{$^\copyright\,$}
\def\tol{\mbox{\em tol}}
\begin{document}

\title{Numerical solution of FDE-IVPs by using Fractional HBVMs: the {\tt fhbvm} code}


\titlerunning{Numerical solution of FDE-IVPs by using FHBVMs}        

\author{Luigi Brugnano   \and  Gianmarco Gurioli \and Felice Iavernaro}


\institute{L.\,Brugnano \at
             Dipartimento di Matematica e Informatica ``U.\,Dini''\\
             Universit\`a di Firenze,  Italy\\ 
              \email{luigi.brugnano@unifi.it}  \\
              \url{https://orcid.org/0000-0002-6290-4107}
                        \and
           G.\,Gurioli \at
             Dipartimento di Matematica e Informatica ``U.\,Dini''\\
             Universit\`a di Firenze,  Italy\\ 
              \email{gianmarco.gurioli@unifi.it}  \\
              \url{https://orcid.org/0000-0003-0922-8119}
                        \and
           F.\,Iavernaro \at
           Dipartimento di Matematica\\ 
           Universit\`a di Bari Aldo Moro, Italy\\
            \email{felice.iavernaro@uniba.it}\\
            \url{https://orcid.org/0000-0002-9716-7370}
}

\date{Received: date / Accepted: date}

\maketitle

\begin{abstract} In this paper we describe the efficient numerical implementation of {\em Fractional HBVMs}, a class of methods recently introduced for solving systems of fractional differential equations. The reported arguments are implemented in the Matlab\cpr code {\tt fhbvm}, which is made available on the web.  An extensive experimentation of the code is reported, to give evidence of its effectiveness.

\keywords{fractional differential equations \and fractional integrals \and Caputo derivative \and Jacobi polynomials \and Fractional Hamiltonian Boundary Value Methods \and FHBVMs}
\subclass{34A08 \and 65R20 \and 65-04}
\end{abstract}

\section{Introduction} 

Fractional differential equations have become a common description tool across a variety of applications (see, e.g.,  the classical references \cite{Di2010,Po1999} for an introduction). For this reason, their numerical solution has been the subject of many researches (see, e.g. \cite{AMN2014,DFF2002,DiFoFr2005,LYC16,Lu1985,SOG17,ZK2014}), with the development of corresponding software (see, e.g., \cite{CCP2023,Ga2015,Ga2018}). In this context, the present contribution is addressed for solving  {\em initial value problems for fractional differential equations (FDE-IVPs)} in the form 
\begin{equation}\label{ivp}
y^{(\aa)}(t) = f(y(t)), \qquad t\in[0,T],\qquad y(0) = y_0\in\RR^m,
\end{equation}
where, for the sake of brevity, we have omitted the argument $t$ for $f$. 
Here, for ~$\aa\in(0,1)$, ~$y^{(\aa)}(t) \equiv D^\aa y(t)$~ is the Caputo fractional derivative:\footnote{As is usual, $\Gamma$ denotes the Euler gamma function, such that, for $x>0$, $x\Gamma(x)=\Gamma(x+1)$.}
\begin{equation}\label{Dalfa}
D^\aa g(t) = \frac{1}{\Gamma(1-\aa)} \int_0^t (t-x)^{-\aa} \left[\frac{\dd}{\dd x}g(x)\right]\dd x,
\end{equation}
The Riemann-Liouville integral associated to (\ref{Dalfa}) is given by:
\begin{equation}\label{Ialfa}
I^\aa g(t) = \frac{1}{\Gamma(\aa)}\int_0^t (t-x)^{\aa-1} g(x)\dd x.
\end{equation}
Consequently, the solution of (\ref{ivp}) can be formally written as:
\begin{equation}\label{sol0}
y(t) = y_0 + I^{\aa} f(y(t)) \equiv y_0 +  \frac{1}{\Gamma(\aa)}\int_0^t (t-x)^{\aa-1} f(y(x))\dd x, \qquad t\in[0,T].
\end{equation}

The numerical method we shall consider, relies on {\em Fractional HBVMs (FHBVMs)}, a class of methods recently introduced in \cite{BBBI2023}, as an extension of  Hamiltonian Boundary Value Methods (HBVMs), special low-rank Runge-Kutta methods originally devised for Hamiltonian problems (see, e.g., \cite{BI2016,BI2018}), and later extended along several directions (see, e.g., \cite{ABI2019,ABI2022-1,ABI2023,BBBI2023,BFCIV2022,BI2022}), including the numerical solution of FDEs. A main feature of HBVMs is the fact that they can gain spectrally accuracy, when approximating ODE-IVPs \cite{ABI2020,BMIR2019,BMR2019}, and such a feature has been recently extended to the FDE case \cite{BBBI2023}.

With this premise, the structure of the paper is as follows: in Section~\ref{start} we recall the main facts about the numerical solution of FDE-IVPs proposed in \cite{BBBI2023}; in Section~\ref{details} we provide full implementation details for the Matlab\cpr code {\tt fhbvm} used in the numerical tests; in Section~\ref{num} we report an extensive experimentation of the code, providing some comparisons with another existing one; at last, a few conclusions are given in Section~\ref{fine}.

\section{Fractional HBVMs}\label{start}
To begin with, in order to obtain a piecewise approximation to the solution of the problem,  we consider a partition of the integration interval in the form:
\begin{equation}\label{tn}
t_n = t_{n-1} + h_n,  \qquad n=1,\dots,N, 
\end{equation}
where \,$h_n>0$, \,$n=1,\dots,N$,\, and such that
\begin{equation}\label{T}
t_0=0, \qquad t_N= T\equiv \sum_{n=1}^N h_n.
\end{equation} 
Further, by setting
\begin{equation}\label{yn}
y_n(ch_n) \,:=\, y(t_{n-1}+ch_n), \qquad c\in[0,1],\qquad n=1,\dots,N,
\end{equation}
the restriction of the solution of (\ref{ivp}) on the interval $[t_{n-1},t_n]$, and taking into account (\ref{sol0}) and (\ref{tn})--(\ref{T}), one obtains, for $t\,\equiv\, t_{n-1}+ch_n$, $c\in[0,1]$,
\begin{eqnarray}
\nonumber \lefteqn{
y(t) ~\equiv~y_n(ch_n) ~=~ y_0 + \frac{1}{\Gamma(\aa)}\int_0^{t_{n-1}+ch_n} (t_{n-1}+ch_n-x)^{\aa-1} f(y(x))\dd x}\\
\nonumber
&=&~y_0 + \frac{1}{\Gamma(\aa)}\sum_{\nu=1}^{n-1} \int_{t_{\nu-1}}^{t_\nu} (t_{n-1}+ch_n-x)^{\aa-1}f(y(x))\dd x\\[2mm]  
\nonumber
&&\qquad + ~\frac{1}{\Gamma(\aa)} \int_{t_{n-1}}^{t_{n-1}+ch_n} (t_{n-1}+ch_n-x)^{\aa-1}f(y(x))\dd x\\[2mm]  
\nonumber
&=&~y_0 + \frac{1}{\Gamma(\aa)}\sum_{\nu=1}^{n-1} \int_0^{h_\nu} (t_{n-1}-t_{\nu-1}+ch_n-x)^{\aa-1}f(y_\nu(x))\dd x\\[2mm]  
\nonumber
&&\qquad + ~\frac{1}{\Gamma(\aa)} \int_0^{ch_n} (ch_n-x)^{\aa-1}f(y_n(x))\dd x\\[2mm]  
\nonumber
&=&~y_0 + \frac{1}{\Gamma(\aa)}\sum_{\nu=1}^{n-1} 
h_\nu^\aa \int_0^1 \left( \frac{t_{n-1}-t_{\nu-1}}{h_\nu}+c\frac{h_n}{h_\nu}-\tau \right)^{\aa-1}f(y_\nu(\tau h_\nu))\dd \tau\\[2mm]  
\nonumber
&&\qquad + ~\frac{h_n^\aa}{\Gamma(\aa)} \int_0^c (c-\tau)^{\aa-1}f(y_n(\tau h_n))\dd \tau\\[2mm]  
\nonumber
&=&~y_0 + \frac{1}{\Gamma(\aa)}\sum_{\nu=1}^{n-1} 
h_\nu^\aa \int_0^1 \left( \sum_{i=\nu}^{n-1} \frac{h_i}{h_\nu}+c\frac{h_n}{h_\nu}-\tau \right)^{\aa-1}f(y_\nu(\tau h_\nu))\dd \tau\\[2mm]  \label{ynch}
&&\qquad + ~\frac{h_n^\aa}{\Gamma(\aa)} \int_0^c (c-\tau)^{\aa-1}f(y_n(\tau h_n))\dd \tau, \qquad c\in[0,1].
\end{eqnarray}
To make manageable the handling of the ratios $h_i/h_\nu$, $i=\nu,\dots,n$, $\nu=1,\dots,n-1$, we shall hereafter consider the following choices for the mesh (\ref{tn})--(\ref{T}):

\begin{description}

\item[\bf - Graded mesh.] In order to cope with possible singularities in the derivative of the vector field at the origin, we consider the  {\em graded mesh}
\begin{equation}\label{hnr}
h_n = r h_{n-1} \equiv h_n = r^{n-1} h_1, \qquad n=1\dots,N,
\end{equation}
where $r>1$ and $h_1>0$ satisfy, by virtue of (\ref{tn})--(\ref{T}),
\begin{equation}\label{rT}
h_1\frac{r^N-1}{r-1} = T.
\end{equation}
As a result, one obtains that (\ref{ynch}) becomes:
\begin{eqnarray}\nonumber
\lefteqn{y_n(ch_n)~=~y_0~ +}\\[1mm] \nonumber
&& \frac{h_1^\aa}{\Gamma(\aa)}\sum_{\nu=1}^{n-1} r^{\aa(\nu-1)}
\int_0^1 \left( \frac{r^{n-\nu}-1}{r-1}+cr^{n-\nu}-\tau \right)^{\aa-1}f(y_\nu(\tau r^{\nu-1}h_1))\dd \tau\\[2mm]  \nonumber
&&\qquad + ~\frac{h_1^\aa r^{\aa(n-1)}}{\Gamma(\aa)} \int_0^c (c-\tau)^{\aa-1}f(y_n(\tau r^{n-1}h_1))\dd \tau\\ \nonumber
&=:& ~\phi_{n-1}^\aa(c;h_1,r,y)  \,+\, \frac{h_1^\aa r^{\aa(n-1)}}{\Gamma(\aa)} \int_0^c (c-\tau)^{\aa-1}f(y_n(\tau r^{n-1}h_1))\dd \tau,\\
&&\qquad\qquad c\in[0,1].\label{grad}
\end{eqnarray}

\bigskip
\item[\bf - Uniform mesh.] This case is equivalent to allowing $r=1$ in (\ref{hnr}). Consequently, from (\ref{tn})--(\ref{T}) we derive 
\begin{equation}\label{unif}
h_n \equiv h_1 := \frac{T}N, \quad n=1,\dots,N, \quad \Rightarrow\quad t_n = nh_1, \quad n=0,\dots,N.
\end{equation}
As a result,  (\ref{ynch}) reads:
\begin{eqnarray}\nonumber
\lefteqn{y_n(ch_n)~\equiv~y_n(ch_1)}\\ \nonumber
&=&y_0 + \frac{h_1^\aa}{\Gamma(\aa)}\sum_{\nu=1}^{n-1} 
\int_0^1 \left( n-\nu+c-\tau \right)^{\aa-1}f(y_\nu(\tau h_1))\dd \tau\\[2mm]  \nonumber
&&\qquad + ~\frac{h_1^\aa}{\Gamma(\aa)} \int_0^c (c-\tau)^{\aa-1}f(y_n(\tau h_1))\dd \tau,\\ \nonumber
&\equiv& ~\phi_{n-1}^\aa(c;h_1,1,y)  \,+\, \frac{h_1^\aa}{\Gamma(\aa)} \int_0^c (c-\tau)^{\aa-1}f(y_n(\tau h_1))\dd \tau  \\
&&\qquad\qquad c\in[0,1].\label{cost}
\end{eqnarray}

\end{description}

\begin{remark}
As is clear, in order to obtain an accurate approximation of the solution, it is important to establish which kind of mesh (graded or uniform) is appropriate. Besides this, also a proper choice of the parameters $h_1$, $r$, and $N$ in (\ref{hnr})--(\ref{rT}) is crucial. Both aspects will be studied in Section~\ref{graduni}.
\end{remark} 

\subsection{Quasi-polynomial approximation}\label{sigma}

We now discuss a piecewise quasi-polynomial approximation to the solution of (\ref{ivp}),
$$\sigma(t)\approx y(t), \qquad t\in[0,T],$$
such that
\begin{equation}\label{sign}
\sigma_n(ch_n) \,:=\, \sigma(t_{n-1}+ch_n), \qquad c\in[0,1],\qquad n=1,\dots,N,
\end{equation}
is the approximation to $y_n(ch_n)$, defined in (\ref{yn}). According to \cite{BBBI2023}, such approximation 
will be derived through the following steps:
\begin{enumerate}
\item expansion of the vector field, in each sub-interval $[t_{n-1},t_n]$, $n=1,\dots,N$, (recall (\ref{tn})--(\ref{T})) along a suitable orthonormal polynomial basis;

\medskip
\item truncation of the infinite expansion, in order to obtain a local polynomial approximation.

\end{enumerate}

Let us first consider the expansion of the vector field along the orthonormal polynomial basis, w.r.t. the weight function
\begin{equation}\label{om}
\omega(x) = \aa(1-x)^{\aa-1}, \qquad s.t.\qquad \int_0^1 \omega(x)\,\dd x=1,
\end{equation}
resulting into a scaled and shifted family of Jacobi polynomials:\footnote{Here, $\overline P_j^{\,(a,b)}(x)$ denotes the $j$-th Jacobi polynomial with parameters $a$ and $b$, in $[-1,1]$.}
$$ 
P_j(x) := \sqrt{\frac{2j+\aa}\aa} \,\overline{P}_j^{\,(\aa-1,0)}(2x-1), \qquad x\in[0,1], \qquad j=0,1,\dots,
$$ 
such that
\begin{equation}\label{orton}
\int_0^1 \omega(x) P_i(x)P_j(x)\dd x = \delta_{ij}, \qquad i,j=0,1,\dots.
\end{equation}
In so doing, for $n=1,\dots,N$, one obtains:
\begin{equation}\label{expf}
f(y_n(ch_n)) = \sum_{j\ge0} P_j(c)\gamma_j(y_n), \qquad c\in[0,1],
\end{equation}
with (see (\ref{om}))
\begin{equation}\label{gammaj}
\gamma_j(y_n) = \int_0^1\omega(\tau) P_j(\tau)f(y_n(\tau h_n))\dd\tau, \qquad j=0,1,\dots.
\end{equation}
Consequently, the FDE (\ref{ivp}), can be rewritten as:
\begin{equation}\label{ivpn}
y_n^{(\aa)}(ch_n) =  \sum_{j\ge0} P_j(c)\gamma_j(y_n), \quad c\in[0,1],\quad n=1,\dots,N, \qquad y_1(0) = y_0. 
\end{equation}

The approximation is derived by truncating the infinite series in (\ref{expf}) to a finite sum with $s$ terms, thus replacing (\ref{ivpn}) with a series of local problems, whose vector field is a polynomial of degree $s$:
\begin{equation}\label{ivpn_s}
\sigma_n^{(\aa)}(ch_n) =  \sum_{j=0}^{s-1} P_j(c)\gamma_j(\sigma_n), \quad c\in[0,1],\quad n=1,\dots,N, \qquad 
\sigma_1(0) = y_0,
\end{equation}
with $\gamma_j(\sigma_n)$ defined similarly as in (\ref{gammaj}):
\begin{equation}\label{gammaj_s}
\gamma_j(\sigma_n) = \int_0^1\omega(\tau)P_j(\tau)f(\sigma_n(\tau h_n))\dd\tau, \qquad j=0,\dots,s-1.
\end{equation}
As a consequence, (\ref{grad}) will be approximated as:
\begin{eqnarray}\nonumber
\lefteqn{\sigma_n(ch_n)~=}\\ \nonumber
&& y_0\,+\,\frac{h_1^\aa}{\Gamma(\aa)}\sum_{\nu=1}^{n-1} r^{\aa(\nu-1)}
\int_0^1 \left( \frac{r^{n-\nu}-1}{r-1}+cr^{n-\nu}-\tau \right)^{\aa-1}\,\sum_{j=0}^{s-1} P_j(\tau)\gamma_j(\sigma_\nu)\dd \tau\\[2mm]  \nonumber
&&\qquad + ~\frac{h_1^\aa r^{\aa(n-1)}}{\Gamma(\aa)} \int_0^c (c-\tau)^{\aa-1}\sum_{j=0}^{s-1} P_j(\tau)\gamma_j(\sigma_n)\dd\tau~=~y_0\,+\\ \nonumber
&&\,h_1^\aa\sum_{\nu=1}^{n-1} r^{\aa(\nu-1)}\sum_{j=0}^{s-1}\left[\frac{1}{\Gamma(\aa)}
\int_0^1 \left( \frac{r^{n-\nu}-1}{r-1}+cr^{n-\nu}-\tau \right)^{\aa-1} P_j(\tau)\dd \tau\right]\gamma_j(\sigma_\nu)\\[2mm]  \nonumber
&&\qquad + ~h_1^\aa r^{\aa(n-1)} \sum_{j=0}^{s-1}\left[\frac{1}{\Gamma(\aa)}\int_0^c (c-\tau)^{\aa-1} P_j(\tau)\dd\tau\right]\gamma_j(\sigma_n)\\ \nonumber
&&=:~y_0\,+\,h_1^\aa\sum_{\nu=1}^{n-1} r^{\aa(\nu-1)}\sum_{j=0}^{s-1} J_j^\aa
\left( \frac{r^{n-\nu}-1}{r-1}+cr^{n-\nu}\right)\gamma_j(\sigma_\nu)\\ \nonumber
&&\qquad +~h_1^\aa r^{\aa(n-1)} \sum_{j=0}^{s-1}I^\aa P_j(c)\,\gamma_j(\sigma_n)\\ 
&&=:~\phi_{n-1}^{\aa,s}(c;h_1,r,\sigma)  ~+~   h_1^\aa r^{\aa(n-1)}\sum_{j=0}^{s-1} I^\aa P_j(c)\,\gamma_j(\sigma_n),
\qquad c\in[0,1],\label{grad_s}
\end{eqnarray}
where (see (\ref{Ialfa}))
\begin{equation}\label{Ijalfa}
I^\aa P_j(c) = \frac{1}{\Gamma(\alpha)}\int_0^c (c-\tau)^{\aa-1}P_j(\tau)\dd\tau,\qquad j=0,\dots,s-1,
\end{equation}
is the Riemann-Liouville integral of $P_j(c)$, and, for $x\ge1$:
\begin{equation}\label{Jjalfa}
J_j^\aa(x) = \frac{1}{\Gamma(\aa)}\int_0^1 (x-\tau)^{\aa-1}P_j(\tau)\dd\tau, \qquad j=0,\dots,s-1.
\end{equation}
The efficient numerical evaluation of the integrals (\ref{Ijalfa}) and (\ref{Jjalfa}) will be explained in detail in Section~\ref{fractint}.
\begin{remark}
We observe that, for $c=x=1$, by virtue of (\ref{om})--(\ref{orton}) one has:
\begin{equation}\label{ceq1}
I^\aa P_j(1) = J_j^\aa(1) = \frac{\delta_{j0}}{\Gamma(\aa+1)}, \qquad j=0,\dots,s-1.
\end{equation}
\end{remark}

Similarly, when using a uniform mesh, by means of similar steps as above, (\ref{cost}) turns out to be approximated by:
\begin{eqnarray}\nonumber
\lefteqn{\sigma_n(ch_n)~=}\\[1mm] \nonumber
&& y_0\,+\,h_1^\aa\sum_{\nu=1}^{n-1} \sum_{j=0}^{s-1} J_j^\aa
\left( n-\nu+c\right)\gamma_j(\sigma_\nu)
\,+\, h_1^\aa \sum_{j=0}^{s-1}I^\aa P_j(c)\,\gamma_j(\sigma_n)\\ 
&\equiv& ~\phi_{n-1}^{\aa,s}(c;h_1,1,\sigma)  ~+~   h_1^\aa \sum_{j=0}^{s-1} I^\aa P_j(c)\,\gamma_j(\sigma_n),
\qquad c\in[0,1].\qquad\label{cost_s}
\end{eqnarray}

In both cases, the approximation at $t_n$ is obtained, by setting $c=1$ and taking into account (\ref{ceq1}), as:
\begin{eqnarray}\nonumber
\sigma_n(h_n) &=& \phi_{n-1}^{\aa,s}(1;h_1,r,\sigma) \,+\, \frac{h_1^\aa r^{\aa(n-1)}}{\Gamma(\aa+1)}\gamma_0(\sigma_n)\\
&\equiv& \phi_{n-1}^{\aa,s}(1;h_1,r,\sigma) \,+\, \frac{h_n^\aa}{\Gamma(\aa+1)}\gamma_0(\sigma_n),\label{ynew}
\end{eqnarray}
which formally holds also in the case of a uniform mesh (see (\ref{unif})) by setting $r=1$.
\bigskip
\bigskip

\subsection{The fully discrete method}\label{fhbvms}

Quoting Dahlquist and Bj\"ork \cite{DB2008}, {\em ``as is well known, even many relatively simple integrals cannot be expressed in finite terms of elementary functions, and thus must be evaluated by numerical methods''}. In our framework, this obvious statement means that, in order to obtain a numerical method, at step $n$ the Fourier coefficients $\gamma_j(\sigma_n)$ in (\ref{gammaj_s}) need to be approximated by means of a suitable quadrature formula. 
Fortunately enough, this can be done up to machine precision by using a Gauss-Jacobi formula of order $2k$ based at the zeros of $P_k(c)$, $c_1,\dots,c_k$, with corresponding weights (see (\ref{om}))
$$b_i = \int_0^1 \omega(c)\ell_i(c)\dd c, \qquad \ell_i(c) = \prod_{j\ne i}\frac{c-c_j}{c_i-c_j}, \qquad i=1,\dots,k,$$ 
by choosing  a value of $k$, $k\ge s$, large enough. In other words,
\begin{equation}\label{gammajn}
\gamma_j^n \,:=\, \sum_{i=1}^k b_i P_j(c_i) f(\sigma_n(c_ih_n)) \,\doteq\, \gamma_j(\sigma_n), \qquad j=0,\dots,s-1,
\end{equation}
where $\,\doteq\,$ means {\em equal within machine precision}. Because of this, and for sake of brevity, we shall continue using $\sigma_n$ to denote the fully discrete approximation (compare with (\ref{grad_s}), or with (\ref{cost_s}), in case $r=1$):\footnote{Hereafter, we shall use $h_n$ in place of $h_1r^{n-1}$.}
\begin{equation}\label{sigman_d}
\sigma_n(ch_n) = \phi_{n-1}^{\aa,s}(c;h_1,r,\sigma) + h_n^\aa\sum_{j=0}^{s-1} I^\aa P_j(c)\gamma_j^n, \qquad c\in[0,1].
\end{equation}
As is clear, the coefficientes $\gamma_j^\nu$, $j=0,\dots,s-1$, $\nu=1,\dots,n-1$, needed to evaluate $\phi_{n-1}^{\aa,s}(c;h_1,r,\sigma)$, have been already computed at the previous time-steps.

We observe that, from (\ref{gammajn}), in order to compute the (discrete) Fourier coefficients, it is enough evaluating (\ref{sigman_d}) only at the quadrature abscissae $c_1,\dots,c_k$. In so doing, by combining (\ref{gammajn}) and (\ref{sigman_d}), one obtains a discrete problem in the form:
\begin{eqnarray}\nonumber
\gamma_j^n &=& \sum_{i=1}^k b_i P_j(c_i) f\left(\phi_{n-1}^{\aa,s}(c_i;h_1,r,\sigma) + h_n^\aa\sum_{\ell=0}^{s-1} I^\aa P_\ell(c_i)\gamma_\ell^n\right), \\
&& j~=~0,\dots,s-1.\label{dispro}
\end{eqnarray}
Once it has been solved, according to (\ref{ynew}), the approximation of $y(t_n)$ is given by:
\begin{equation}\label{ynew_d}
\bar y_n \,:=\, \sigma_n(h_n) \,=\,\phi_{n-1}^{\aa,s}(1;h_1,r,\sigma) \,+\, \frac{h_n^\aa}{\Gamma(\aa+1)}\gamma_0^n.
\end{equation}

It is worth noticing that the discrete problem (\ref{dispro}) can be cast in vector form, by introducing the (block) vectors
$$\bfgamma^n = \pmatrix{c} \gamma_0^n\\ \vdots \\ \gamma_{s-1}^n\endpmatrix\in\RR^{sm},\qquad \bfphi_{n-1}^{\aa,s} = \pmatrix{c} \phi_{n-1}^{\aa,s}(c_1;h_1,r,\sigma)\\ \vdots\\ \phi_{n-1}^{\aa,s}(c_k;h_1,r,\sigma)\endpmatrix\in\RR^{km},$$
and the matrices
$$
\P_s = \pmatrix{ccc} P_0(c_1) & \dots & P_{s-1}(c_1)\\ \vdots & &\vdots\\ P_0(c_k) & \dots & P_{s-1}(c_k)\endpmatrix,
\quad \I_s^\aa = \pmatrix{ccc} I^\aa P_0(c_1)& \dots &I^\aa P_{s-1}(c_1)\\ \vdots & &\vdots\\ I^\aa P_0(c_k) & \dots &I^\aa P_{s-1}(c_k)\endpmatrix\quad \in\RR^{k\times s},$$
$$\Omega = \pmatrix{ccc} b_1\\ &\ddots\\ &&b_k\endpmatrix\in\RR^{k\times k},$$
as:
\begin{equation}\label{vform}
\bfgamma^n = \P_s^\top\Omega \otimes I_m f\left( \bfphi_{n-1}^{\aa,s} +h_n^\aa\I_s^\aa\otimes I_m\bfgamma^n\right),
\end{equation}
with the obvious notation for the function $f$, evaluated in a vector of (block) dimension $k$, of denoting the (block) vector of dimension $k$ containing the function $f$ evaluated in each (block) entry. As observed in \cite{BBBI2023}, the (block) vector
\begin{equation}\label{Y}
Y \,:=\, \bfphi_{n-1}^{\aa,s} +h_n^\aa\I_s^\aa\otimes I_m\bfgamma^n\in\RR^{km},
\end{equation}
appearing at the r.h.s. in (\ref{vform}) as argument of $f$, satisfies the equation 
\begin{equation}\label{vformk}
Y ~=~ \bfphi_{n-1}^{\aa,s} +h_n^\aa\I_s^\aa\P_s^\top\Omega \otimes I_m\, f(Y),
\end{equation}
obtained combining (\ref{vform}) and (\ref{Y}). 
Consequently, it can be regarded as the stage vector of a Runge-Kutta type method with Butcher tableau
\begin{equation}\label{fhbvmks}
\begin{array}{c|c} \bfc & \I^\aa_s \P_s^\top\Omega \\ \hline  \\[-3mm] & \bfb^\top \end{array}, \qquad
\bfb=\pmatrix{ccc} b_1,&\dots,&b_k\endpmatrix^\top, \qquad \bfc=\pmatrix{ccc} c_1,&\dots,&c_k\endpmatrix^\top.
\end{equation}

\begin{remark}
Though the two formulations (\ref{vform}) and (\ref{vformk}) are equivalent each other, nevertheless, the former has (block) dimension $s$ {\em independently} of the considered value of $k$ ($k\ge s$), which is the (block) dimension of the latter. Consequently, in the practical implementation of the method, the discrete problem (\ref{vform}) is the one to be preferred, since it is independent of the number of stages.
\end{remark}

When $\aa=1$, the Runge-Kutta method (\ref{fhbvmks}) reduces to a HBVM$(k,s)$ method \cite{ABI2020,ABI2022,BFCIV2022,BI2016,BI2018,BI2022,BMIR2019,BMR2019}. Consequently, we give the following definition \cite{BBBI2023}.

\begin{definition}
The method defined by  (\ref{fhbvmks}) (i.e., (\ref{ynew_d})--(\ref{vform})) is called {\em fractional HBVM with parameters $(k,s)$}. In short, FHBVM$(k,s)$.
\end{definition}

The efficient numerical solution of the discrete problem (\ref{vform}) will be considered in Section~\ref{blend}.

\section{Implementation issues}\label{details}

In this section we report all the implementation details used in the Matlab\cpr code {\tt fhbvm}, which will be used in the numerical tests reported in Section~\ref{num}.

\subsection{Graded or uniform mesh?}\label{graduni}

The first relevant problem to face is the choice between a graded or a uniform mesh and, in the former case, also choosing appropriate values for the parameters $r$, $N$, and $h_1$ in (\ref{hnr})--(\ref{rT}).  We start considering a proper choice of this latter parameter, i.e., $h_1$ which, in turn, will allow us to choose the type of mesh, too. For this purpose, the user is required to provide, in input, a convenient integer value $M>1$, such that,  if a uniform mesh is appropriate, then the stepsize is given by: 
\begin{equation}\label{hmax}
h = \frac{T}M.
\end{equation}
It is to be noted that, since the code is using a method with spectral accuracy in time, the value of $M$ should be as small as possible. That said, we set $h_1^0=h$ and apply the code on the interval $[0,h_1^0]$ both in a single step, and by using a graded mesh of 2 sub-intervals defined by a value of $r:=\hat r\equiv 3$.  As a result, the two sub-intervals, obtained by solving (\ref{rT}) for $h_1$, with
$$ r=3, \qquad N=2, \qquad T=h_1^0,$$
are\,\footnote{It is to be noticed that the division by 4 is done without introducing round-off errors.}
$$[0,h_1^0/4], \qquad [h_1^0/4,h_1^0].$$
In so doing, we obtain two approximations to $y(h_1^0)$, say $y_1$ and $y_2$. According to the analysis in \cite{BBBI2023}, if\,\footnote{Here,  $./$ means the componentwise division between two vectors, and $|y_2|$ denotes the vector with the absolute values of the entries of $y_2$.} 
\begin{equation}\label{erro}
\| (y_1-y_2)./(1+|y_2|)\|_\infty \le \tol,
\end{equation}
with \tol\, a suitably small tolerance,\footnote{In view of the spectral accuracy of the method, this tolerance is only slightly larger than the machine epsilon.} then this means the stepsize $h_1:=h_1^0$ is appropriate. Moreover, in such a case, a uniform mesh with $N\equiv M$ turns out to be appropriate, too. 

Conversely, the procedure is repeated on the sub-interval $[0,h_1^1] \equiv [0,h_1^0/4]$, and so forth, until a convenient initial stepsize $h_1$ is obtained. This process can be repeated up to a suitable maximum number of times that, considering that at each iteration the current guess of $h_1$ is divided by 4, allows for a substantial reduction of the initial value (\ref{hmax}). Clearly, in such a case, a graded mesh is needed. At the end of this procedure, we have then chosen the initial stepsize $h_1$ which, if the procedure ends at the $\ell$-th iteration, for a convenient $\ell>1$, is given by:
\begin{equation}\label{h1ell}
h_1 = 4^{1-\ell} h \equiv 4^{1-\ell} \frac{T}M.
\end{equation}
We need now to appropriately choose the parameters $r$ and $N$ in (\ref{hnr})--(\ref{rT}).   To simplify this choice,
we require that the last stepsize in the mesh be equal to (\ref{hmax}). Consequently, we would ideally satisfy the following requirements:
$$h_1\frac{r^N-1}{r-1} = T, \qquad h_1r^{N-1} = \frac{T}M,$$
which, by virtue of (\ref{h1ell}), become
$$4^{1-\ell}\frac{r^N-1}{r-1}=M, \qquad 4^{1-\ell}r^{N-1}=1.$$
Combining the two equations then gives:
\begin{equation}\label{rN0}
r = \frac{M-4^{1-\ell}}{M-1}>1, \qquad  N = 1 +\log_r(4^{\ell-1}).
\end{equation}
As is clear, this value of $N$ is not an integer, in general, so that we shall choose, instead:
\begin{equation}\label{N}
N = \lceil 1 +\log_r(4^{\ell-1})\rceil.
\end{equation}
\begin{remark}
From (\ref{N}), and considering that $Nh_1<T$ (conversely, a uniform mesh could have been used) and, by virtue of (\ref{h1ell}), one has:
\begin{equation}\label{bound}
2\le N < 4^{\ell-1}M.
\end{equation}
\end{remark}
As is clear, using the value of $N$ in (\ref{N}) in place of that in (\ref{rN0}) implies that we need to recompute $r$, so that the requirement (now $h_1$, $N$, and $T$ are given)
\begin{equation}\label{erre}
h_1\frac{r^N-1}{r-1}=T 
\end{equation}
is again fulfilled. Equation (\ref{erre}) can be rewritten as
\begin{equation}\label{psi}
r = \left[ 1 + (r-1)\beta \right]^{1/N} =: \psi(r), \qquad \beta := \frac{T}{h_1}>1,
\end{equation}
thus inducing the iterative procedure
\begin{equation}\label{iter}
r_0>1 \mbox{\quad(given)}, \qquad r_{i+1} = \psi(r_i), \qquad i=0,1,\dots.
\end{equation}
As a convenient choice for $r_0$, one can use the guess for $r$ defined in (\ref{rN0}).
The following result can be proved.

\begin{theorem}\label{converge}
There exists a unique $\bar r>1$ satisfying $\bar r=\psi(\bar r)$, and the iteration (\ref{iter}) globally converges to this value over the interval $(1, +\infty)$.
\end{theorem}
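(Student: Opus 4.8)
The plan is to analyze the fixed-point map as a real function on $[1,+\infty)$ and to exploit its monotonicity and concavity. First I would record the elementary structural properties of $\psi$. Writing $\psi(r)=u(r)^{1/N}$ with $u(r)=1+(r-1)\beta$ affine and strictly increasing (slope $\beta>0$), and using that $t\mapsto t^{1/N}$ is strictly increasing and strictly concave for $N\ge 2$, it follows that $\psi$ is strictly increasing and strictly concave on $[1,+\infty)$, and that it maps this interval into itself, since $u(r)\ge 1$ there forces $\psi(r)\ge 1$.

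Next I would locate the fixed points via $g(r):=\psi(r)-r$. Because $\psi(1)=1$, the point $r=1$ is always a fixed point, so the real content is to produce a second one in $(1,+\infty)$ and to prove it unique. Differentiating gives $\psi'(1)=\beta/N$, and the decisive inequality is $\beta/N>1$. This holds in the present setting: by (\ref{h1ell}) one has $\beta=T/h_1=4^{\ell-1}M$, whence (\ref{bound}) gives $\beta>N$. Therefore $g'(1)=\beta/N-1>0$, so $g>0$ immediately to the right of $1$; on the other hand $\psi(r)\sim(\beta r)^{1/N}=o(r)$ as $r\to+\infty$ (here the assumption $N\ge 2$ is essential), so $g(r)\to-\infty$. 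The intermediate value theorem then yields a root $\bar r>1$. Uniqueness follows from strict concavity: a strictly concave $g$ can attain the level $0$ at most twice, and since $r=1$ is already a root, there is exactly one further root $\bar r>1$.

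Finally I would establish global convergence by a monotone-sequence argument, using that $\psi$ is increasing and that the sign of $g$ is positive on $(1,\bar r)$ and negative on $(\bar r,+\infty)$, a direct consequence of the two-root/concavity structure just obtained. If $r_0\in(1,\bar r)$, then $g(r_0)>0$ and $\psi(r_0)<\psi(\bar r)=\bar r$ give $r_0<r_1<\bar r$; by induction $(r_i)$ is increasing and bounded above by $\bar r$, hence convergent, and its limit is a fixed point lying in $(1,\bar r]$, which can only be $\bar r$. Symmetrically, if $r_0>\bar r$ the iterates satisfy $\bar r<r_{i+1}<r_i$, so $(r_i)$ decreases and is bounded below by $\bar r$, again converging to $\bar r$; the case $r_0=\bar r$ is stationary. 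These three cases exhaust $(1,+\infty)$.

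The main obstacle is not any individual estimate but securing the strict inequality $\psi'(1)=\beta/N>1$, which is precisely what makes $r=1$ repelling and forces the sought fixed point to lie strictly above $1$; I would isolate it explicitly by invoking (\ref{h1ell}) and (\ref{bound}). Once that is in hand, strict concavity delivers uniqueness and monotonicity delivers the nested convergence with only routine verifications.
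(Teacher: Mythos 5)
Your proposal is correct and follows essentially the same route as the paper: establish that $\psi$ is strictly increasing and strictly concave on $(1,+\infty)$ with $\psi(1)=1$, $\psi'(1)=\beta/N>1$, and sublinear growth at infinity, deduce the existence and uniqueness of the fixed point $\bar r>1$ together with the sign pattern of $\psi(r)-r$, and conclude by the monotone bounded-sequence argument. The only differences are cosmetic (you derive concavity from the composition $u(r)^{1/N}$ and use $\psi(r)=o(r)$ in place of $\psi'(r)\to 0$, and you make the inequality $\beta>N$ explicit via (\ref{h1ell}) and (\ref{bound}), which the paper leaves implicit).
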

\begin{proof} 
A direct computation shows that:
\begin{eqnarray}
\psi(1)=1,  \qquad   \psi'(r)>0,  ~\textrm{for~} r>1, \label{psi_cond1} \\
\psi'(1)=\frac{\beta}{N}>1,~ \lim_{r\rightarrow +\infty} \psi'(r)=0,  \qquad  \psi''(r)<0, ~\textrm{for~} r>1. \label{psi_cond2}
\end{eqnarray}
From (\ref{psi_cond1}) we deduce that the (positive) mapping $\psi(r)$ is strictly increasing and admits $(1,+\infty)$ as invariant set, since  $r>1$ implies $\psi(r)>\psi(1)=1$. 

From (\ref{psi_cond2}) we additionally deduce that $\psi(r)$ is concave and $\psi(r)>r$ for $r\in(1,1+\eps)$, with $\eps>0$ sufficiently small.  These properties and the fact that  $\psi'(r) \rightarrow 0$, for $r\rightarrow+\infty $,   imply that: 
\begin{itemize}
\item[-] the equation $r=\psi(r)$  admits a unique solution ~$\bar r>1$ ~and~ $\psi'(\bar r)<1$;
\medskip
\item[-] $\psi\big((1,\bar r)\big) \subset (1,\bar r)$  ~and~ $\psi\big((\bar r,+\infty)\big) \subset (\bar r,+\infty)$ ~(since $\psi$ is increasing);
\medskip 
\item[-] $\psi(r)>r$ ~for ~$1<r<\bar r$ ~and~ $\psi(r)<r$ ~for~ $r>\bar r$. 
\end{itemize}
From the two latter properties we conclude that, for any $r_0>1$, the sequence generated by (\ref{iter}) converges monotonically to $\bar r$. \qed
\end{proof}\bigskip

Consequently, we have derived the parameters $h_1$ in (\ref{h1ell}), $N$ in (\ref{N}), and $r$ satisfying (\ref{erre}) of the graded mesh (\ref{hnr})--(\ref{rT}), the latter one obtained  by a few iterations of (\ref{iter}).

\begin{remark}
In the actual implementation of the code, we allow the use of a uniform mesh also when $\ell=2$ steps of the previous procedure are required, provided that $M$ has a moderate value (say, $M\le5$). Consequently, for the final mesh, $h_1=h/4$, $r=1$, and $N=4M$.
\end{remark}

\subsection{Approximating the fractional integrals}\label{fractint}
The practical implementation of the method requires the evaluation of the following integrals (recall (\ref{dispro}) and the definitions  (\ref{Ijalfa})--(\ref{Jjalfa})):
\begin{equation}\label{Ijalfa1}
I^\aa P_j(c_i),\qquad i=1,\dots,k,\quad j=0,\dots,s-1,
\end{equation}
and
\begin{equation}\label{Jjalfa1}
J_j^\aa\left(\frac{r^\nu-1}{r-1}+c_ir^\nu\right), \qquad i=1,\dots,k,\quad j=0,\dots,s-1, \quad \nu=1\dots,N-1,
\end{equation}
in case a graded mesh is used, or
\begin{equation}\label{Jjalfa2}
J_j^\aa(\nu+c_i), \qquad i=1,\dots,k,\quad j=0,\dots,s-1, \quad \nu=1\dots,N-1,
\end{equation}
in case a uniform mesh is used.

It must be emphasized that all such integrals ((\ref{Ijalfa1}) and (\ref{Jjalfa1}), or (\ref{Ijalfa1}) and (\ref{Jjalfa2})), can be pre-computed once for all, for later use. For their computation, in the first version of the software, we adapted an algorithm based on \cite{ABI2022} which, however, required a quadruple precision, for the considered values of $k$ and $s$. In this respect, the use of the standard {\tt vpa} of Matlab\cpr, which is based on a symbolic computation, turned out to be too slow. For this reason, hereafter we describe two new algorithms for computing the above integrals, which result  to be quite satisfactory, when using the standard double precision IEEE. Needless to say that, since {\tt vpa} is no more required, they turn out to be much faster than the previous ones.

Let us describe, at first, the computation of (\ref{Ijalfa1}). One has, by considering that $k\ge s$ and $c_i\in(0,1)$:
\begin{eqnarray*}
\lefteqn{I^\aa P_j(c_i) ~=~ \frac{1}{\Gamma(\aa)}\int_0^{c_i}(c_i-\tau)^{\aa-1}P_j(\tau)\dd\tau} \\[1mm]
&=&\,\frac{c_i^{\aa-1}}{\Gamma(\aa)}\int_0^{c_i}\left(1-\frac{\tau}{c_i}\right)^{\aa-1}P_j(\tau)\dd\tau\,=\,\frac{c_i^\aa}{\Gamma(\aa)}\int_0^1\left(1-\xi\right)^{\aa-1}P_j(\xi c_i)\dd\xi\\[1mm]
&=&\,\frac{c_i^\aa}{\Gamma(\aa+1)}\int_0^1\aa\left(1-\xi\right)^{\aa-1}P_j(\xi c_i)\dd\xi \,\equiv\,
\frac{c_i^\aa}{\Gamma(\aa+1)}\sum_{\ell=1}^k b_\ell P_j(c_ic_\ell),\\[1mm]
&&\qquad\qquad j=0,\dots,s-1,
\end{eqnarray*}
where the last equality holds because the Jacobi quadrature formula has order $2k$. 
Clearly, for each $i=1,\dots,k$, all the above integrals can be computed in vector form in ``one shot'', by using the usual three-term recurrence to compute the Jacobi polynomials.

Concerning the integrals (\ref{Jjalfa1})-(\ref{Jjalfa2}), let us now consider the evaluation of a generic $J_j^\aa(x)$, $j=0,\dots,s-1$, for $x>1$. In this respect, there is numerical evidence that, for $x\ge1.1$, a high-order Gauss-Legendre formula is able to approximate the required integral to full machine precision. Since we will use a value of $s=20$, we consider, for this purpose, a Gauss-Legendre formula of order $60$, which turns out to be fully accurate. Instead, for $x\in(1,1.1)$, one has: 
\begin{eqnarray*}
\lefteqn{J_j^\aa(x) ~=~\frac{1}{\Gamma(\aa)}\int_0^1 (x-\tau)^{\aa-1}P_j(\tau)\dd \tau}\\[1mm]
&=& \frac{1}{\Gamma(\aa)}\left[\int_0^x (x-\tau)^{\aa-1}P_j(\tau)\dd \tau ~-~ \int_1^x(x-\tau)^{\aa-1}P_j(\tau)\dd \tau
\right]\\[1mm]
&=& \frac{x^{\aa-1}}{\Gamma(\aa)}\int_0^x \left(1-\frac{\tau}x\right)^{\aa-1}P_j(\tau)\dd \tau \\[1mm]
&&  \qquad ~-~ 
 \frac{(x-1)^{\aa-1}}{\Gamma(\aa)}\int_0^{x-1}\left(1-\frac{c}{x-1}\right)^{\aa-1}P_j(1+c)\dd c
\\[1mm]
&=& \frac{x^\aa}{\Gamma(\aa)}\int_0^1 (1-\xi)^{\aa-1}P_j(\xi x)\dd \xi - \frac{(x-1)^\aa}{\Gamma(\aa)}\int_0^1 (1-\xi)^{\aa-1}P_j(1+\xi(x-1))\dd \xi\\[1mm]
&=& \frac{x^\aa}{\Gamma(\aa+1)}\int_0^1 \aa(1-\xi)^{\aa-1}P_j(\xi x)\dd \xi \\[1mm]
&&\qquad~-~ \frac{(x-1)^\aa}{\Gamma(\aa+1)}\int_0^1 \aa(1-\xi)^{\aa-1}P_j(1+\xi(x-1))\dd \xi\\[1mm]
&\equiv& \frac{x^\aa}{\Gamma(\aa+1)}\sum_{\ell=1}^k b_\ell P_j(c_\ell x)~-~
\frac{(x-1)^\aa}{\Gamma(\aa+1)}\sum_{\ell=1}^k b_\ell P_j(1+c_\ell(x-1)),\\[1mm]
&&\qquad\qquad j=0,\dots,s-1,
\end{eqnarray*}
again, due to the fact that the quadrature is exact for polynomials of degree at most $s-1$. Also in this case, for each fixed $x>1$, all the integrals can be computed in ``one shot'' by using the three-term recurrence of the Jacobi polynomials. 

We observe that the previous expression is exact also for $x=1$, since $J_j^\aa(1)=\delta_{j0}/\Gamma(\aa+1)$.

\subsection{Error estimation}\label{errest}
An estimate of the global error can be derived by computing the solution on a doubled mesh. In other words, if (see (\ref{ynew_d}))
$$\bar y_n\,\approx\, y(t_n), \qquad n=0,\dots,N,$$
with $t_n$ as in (\ref{tn})--(\ref{T}), are the obtained approximations, then
$$e_n \,:=\, y(t_n)-\bar y_n \,\approx\, \hat y_{2n}-\bar y_n, \qquad n=0,\dots,N,$$
where $\hat y_n\approx y(\hat t_n)$, $n=0,\dots,2N$, is the solution computed on a doubled mesh. 

When a uniform mesh (\ref{unif}) is used, the doubled mesh is simply given by:
$$\hat t_n = n\frac{h_1}2\equiv n\frac{T}{2N}, \qquad n=0,\dots,2N.$$

Conversely, when a graded mesh (\ref{hnr})--(\ref{rT}) is considered, the doubled mesh is given by:
$$\hat t_n = \hat t_{n-1} + \hat h_n, \qquad \hat h_n = \hat r \hat h_{n-1}\equiv \hat r^{n-1}\hat h_1, 
\qquad n=1,\dots,2N,$$with $\hat t_0=0$, and  $$\hat r = \sqrt r, \qquad \hat h_1 = h_1\frac{\hat r-1}{r-1}.$$
The choice of $\hat h_1$ is done for having
$$\hat h_1\frac{\hat r^{2N}-1}{\hat r-1} = \hat h_1\frac{r^N-1}{\hat r-1} \equiv h_1\frac{r^N-1}{r-1}=T,$$
according to (\ref{rT}).

\subsection{The nonlinear iteration}\label{blend}

At last, we describe the efficient numerical solution of the discrete problem (\ref{vform}), which has to be solved at the $n$-th integration step. As is clear, the very formulation of the problem induces a straightforward fixed-point iteration:
\begin{equation}\label{vformell}
\bfgamma^{n,\ell} = \P_s^\top\Omega \otimes I_m f\left( \bfphi_{n-1}^{\aa,s} +h_n^\aa\I_s^\aa\otimes I_m\bfgamma^{n,\ell-1}\right),\qquad n=1,2,\dots,
\end{equation}
which can be conveniently started from $\bfgamma^{n,0}=\bf0$. The following straightforward result holds true.

\begin{theorem}\label{fpit}
Assume $f$ be Lipchitz with constant $L$ in in the interval $[t_{n-1},t_n]$. Then, the iteration
(\ref{vformell}) is convergent for all timesteps $h_n$ such that
$$h_n^\aa L\|\P_s^\top\Omega\|\|\I_s^\aa\| < 1.$$
\end{theorem}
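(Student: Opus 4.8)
The plan is to recognize the iteration (\ref{vformell}) as a fixed-point iteration $\bfgamma^{n,\ell}=G(\bfgamma^{n,\ell-1})$ for the map
$$G(\bfgamma) := \P_s^\top\Omega\otimes I_m\, f\!\left(\bfphi_{n-1}^{\aa,s} + h_n^\aa\,\I_s^\aa\otimes I_m\,\bfgamma\right), \qquad \bfgamma\in\RR^{sm},$$
and to prove that $G$ is a contraction on $\RR^{sm}$ whenever the stated bound holds. Since $\RR^{sm}$ is complete, the Banach fixed-point theorem then delivers both the existence of a unique fixed point (which is precisely the solution of (\ref{vform})) and the global convergence of (\ref{vformell}) to it from any starting guess, in particular from $\bfgamma^{n,0}=\bfzero$.

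First I would take two arguments $\bfgamma,\tilde\bfgamma\in\RR^{sm}$ and form the associated stage vectors $Y:=\bfphi_{n-1}^{\aa,s}+h_n^\aa\,\I_s^\aa\otimes I_m\,\bfgamma$ and $\tilde Y$ defined analogously, as in (\ref{Y}). Subtracting, the fixed term $\bfphi_{n-1}^{\aa,s}$ cancels, leaving $Y-\tilde Y = h_n^\aa\,\I_s^\aa\otimes I_m\,(\bfgamma-\tilde\bfgamma)$, whence $G(\bfgamma)-G(\tilde\bfgamma)=\P_s^\top\Omega\otimes I_m\,[f(Y)-f(\tilde Y)]$. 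Using the Lipschitz hypothesis on $f$ over $[t_{n-1},t_n]$ — applied blockwise, so that the stacked evaluation of $f$ on a block vector inherits the same constant $L$ in the Euclidean norm — together with submultiplicativity of the induced matrix norm and the identity $\|A\otimes I_m\|=\|A\|$, I obtain
$$\|G(\bfgamma)-G(\tilde\bfgamma)\| \,\le\, \|\P_s^\top\Omega\|\,L\,\|Y-\tilde Y\| \,\le\, h_n^\aa L\,\|\P_s^\top\Omega\|\,\|\I_s^\aa\|\,\|\bfgamma-\tilde\bfgamma\|.$$
Thus $G$ is Lipschitz with constant $h_n^\aa L\|\P_s^\top\Omega\|\|\I_s^\aa\|$, which is strictly less than $1$ by assumption, so $G$ is a contraction and the contraction mapping principle concludes the proof.

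The only point requiring a little care — rather than a genuine obstacle — is the bookkeeping of the Kronecker/block structure: one must confirm that a vector field $f$ Lipschitz on $\RR^m$ induces a map on $\RR^{km}$ (resp.\ $\RR^{sm}$) with the same Lipschitz constant once the blocks are stacked and the Euclidean norm is used, and that tensoring a matrix with $I_m$ leaves the relevant operator norm unchanged. Both facts are elementary for the spectral ($2$-)norm, which justifies the qualifier \emph{straightforward} in the statement; the minor informality that the norms $\|\P_s^\top\Omega\|$ and $\|\I_s^\aa\|$ are left unspecified is resolved by reading them consistently as $2$-norms throughout.
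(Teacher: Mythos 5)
Your argument is correct: the contraction-mapping estimate $\|G(\bfgamma)-G(\tilde\bfgamma)\|\le h_n^\aa L\|\P_s^\top\Omega\|\,\|\I_s^\aa\|\,\|\bfgamma-\tilde\bfgamma\|$, obtained via $\|A\otimes I_m\|=\|A\|$ and the blockwise inheritance of the Lipschitz constant in the $2$-norm, is exactly the standard proof of this ``straightforward result.'' The paper itself gives no argument, deferring entirely to \cite[Theorem~2]{BBBI2023}, so your write-up supplies the expected reasoning rather than diverging from it; the only informality worth flagging (shared with the theorem statement) is that global convergence from $\bfgamma^{n,0}=\bfzero$ tacitly requires $f$ to be Lipschitz on a region containing all the iterates, not merely near the solution.
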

\begin{proof} See \cite[Theorem\,2]{BBBI2023}.\qed\end{proof}\bigskip

Nevertheless, as is easily seen, also the simple equation
$$y^{(\aa)} = -\lambda y, \qquad t\in[0,T], \qquad y(0)=1, \qquad T,\lambda\gg1,$$
whose solution is almost everywhere close to 0, after an initial transient, suffers from stepsize limitations, if the fixed-point iteration (\ref{vformell}) is used, since it has to be everywhere proportional to $\lambda^{-1/\aa}$.

In order to overcome this drawback, a Newton-type iteration is therefore needed. Hereafter, we consider the so-called {\em blended iteration} which has been at first studied in a series of papers \cite{B2000,BM2002,BM2007,BM2009}.
It has been implemented in the Fortran codes {\tt BIM} \cite{BM2004}, for ODE-IVPs, and {\tt BIMD} \cite{BMM2006}, for ODE-IVPs and linearly implicit DAEs, and in the Matlab code {\tt hbvm} \cite{BI2016,BIT2011}, for solving Hamiltonian problems. We here consider its adaption for solving (\ref{vform}). By neglecting, for sake of brevity, the time-step index $n$, we then want to solve the equation:
\begin{equation}\label{Ggam}
G(\bfgamma) :=  \bfgamma - \P_s^\top\Omega \otimes I_m f\left( \bfphi^{\aa,s} +h^\aa\I_s^\aa\otimes I_m\bfgamma\right) = \bfzero.
\end{equation}
By setting $f_0'$ the Jacobian of $f$ evaluated at the first entry of $\bfphi^{\aa,s}$, $I=I_s\otimes I_m$, and
\begin{equation}\label{Xs}
X_s^\aa := \P_s^\top\Omega \I_s^\aa,
\end{equation}
the application of the simplified Newton method then reads:
\begin{eqnarray}\label{simpnewt}
\mathrm{solve:}&& \left( I - h^\aa X_s^\aa\otimes f_0' \right)\Delta\bfgamma^\ell = -G(\bfgamma^\ell) \equiv \bfeta^\ell, \\[2mm] \nonumber
\mathrm{set:}&& \bfgamma^{\ell+1}=\bfgamma^\ell+\Delta\bfgamma^\ell,\qquad \ell=0,1,\dots.
\end{eqnarray}
Even though this iteration has the advantage of using a coefficient matrix which is constant at each time-step, nevertheless, its dimension may be large, when either $s$ or $m$ are large. To study a different iteration, able to get rid of this problem, let us decouple the linear system into the various eigenspaces of $f_0'$, thus studying the simpler problem
\begin{eqnarray*}
\mathrm{solve:}&& \left( I_s - h^\aa\mu X_s^\aa \right)\Delta\gamma^\ell = -g(\gamma^\ell)\equiv \eta^\ell, \\[2mm] \nonumber
\mathrm{set:}&& \gamma^{\ell+1}=\gamma^\ell+\Delta\gamma^\ell,\qquad \ell=0,1,\dots,
\end{eqnarray*}
with all involved vectors of dimension $s$, and $\mu\in\sigma(f_0')$ a generic eigenvalue of $f_0'$, and an obvious meaning of $g(\gamma^\ell)$. By setting $q=h^\aa\mu$, the iteration then reads:
\begin{eqnarray}\label{simpnewtmu}
\mathrm{solve:}&& \left( I_s - q X_s^\aa \right)\Delta\gamma^\ell = \eta^\ell, \\[2mm] \nonumber
\mathrm{set:}&& \gamma^{\ell+1}=\gamma^\ell+\Delta\gamma^\ell,\qquad \ell=0,1,\dots.
\end{eqnarray}
Hereafter, we consider the iterative solution of the linear system in (\ref{simpnewtmu}). 
A linear analysis of convergence (in the case the r.h.s. is constant) is then made, as at first suggested in \cite{HS1997,HS1997-1,HSS1996}, and later refined in \cite{BM2002,BM2009}. Consequently, skipping the iteration index $\ell$, let us consider the linear system to be solved:
$$ \left( I_s - q X_s^\aa \right)\Delta\gamma = \eta,$$
and its equivalent formulation, derived considering that matrix (\ref{Xs}) is nonsingular, and
with $\xi>0$ a parameter to be later specified, 
$$ \xi\left( (X_s^\aa)^{-1} - q I_s \right)\Delta\gamma = \xi(X_s^\aa)^{-1}\eta =: \eta_1.$$
Further, we consider the {\em blending} of the previous two equivalent formulations with weights $\theta(q)$ and $I_s-\theta(q)$, where, by setting $O\in\RR^{s\times s}$ the zero matrix,
$$\theta(q) := I_s(1-\xi q)^{-1}\left\{\begin{array}{cc} \approx I_s, &\quad q\approx 0,\\[1mm] \rightarrow O, &q\rightarrow \infty,\end{array}\right.$$
In so doing, one obtains the linear system
\begin{equation}\label{etaq}
M(q)\Delta\gamma = \eta_1+\theta(q)(\eta-\eta_1) =: \eta(q),
\end{equation}
with the coefficient matrix,
$$M(q) = \xi\left( (X_s^\aa)^{-1} - q I_s \right) + \theta(q)\left[\left( I_s - q X_s^\aa \right) -\xi\left( (X_s^\aa)^{-1} - q I_s \right)\right],$$ 
such that \cite{BM2002}: 
$$M(q) \approx\left\{\begin{array}{cc} I_s, &q\approx 0,\\[2mm] -\xi qI_s,& |q|\gg 1.\end{array}\right.$$
This naturally induces the splitting matrix 
\begin{equation}\label{Nq}
N(q) := I_s(1-\xi q)\equiv \theta(q)^{-1}, 
\end{equation}
defining the {\em blended iteration}
\begin{equation}\label{blendi}
\Delta\gamma_i = \left[I_s-\theta(q)M(q)\right]\Delta\gamma_{i-1} +\theta(q)\eta(q), \qquad i=1,2,\dots.
\end{equation}
This latter iteration converges iff the spectral radius of the iteration matrix,
\begin{equation}\label{rom1}
\rho\left(I_s-\theta(q)M(q)\right)=: \rho(q) <1.
\end{equation}
The iteration is said to be $A$-convergent if (\ref{rom1}) holds true for all $q\in\CC^-$, the left-half complex plane, and $L$-convergent if, in addition, $\rho(q)\rightarrow0$, as $q\rightarrow\infty$. Since \cite{BM2002}
$$\theta(0)M(0)=I_s, \qquad \theta(q)M(q)\rightarrow I_s, \quad q\rightarrow\infty,$$
the blended iteration is $L$-convergent iff it is $A$-convergent. For this purpose, we shall look for a suitable choice of the positive parameter $\xi>0$. Considering that $\theta(q)$ is well defined for all $q\in\CC^-$, the following statement easily follows from the maximum modulus theorem.

\begin{theorem}\label{rostar} The blended iteration is $L$-convergent iff the {\em maximum amplification factor},
$$\rho^* := \max_{x>0} \rho(\ii x) \le 1.$$
\end{theorem}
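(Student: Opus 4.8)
The plan is to exploit the equivalence between $L$- and $A$-convergence already recorded above, and then to analyze the eigenvalues of the iteration matrix $I_s-\theta(q)M(q)$ as scalar functions of $q$ on the closed left half-plane. First I would note that $\theta(q)$ is a scalar rational function and that $M(q)$ is assembled solely from $I_s$, $X_s^\aa$ and $(X_s^\aa)^{-1}$; hence $I_s-\theta(q)M(q)$ is a rational function of the single matrix $X_s^\aa$. Since $X_s^\aa$ is nonsingular, $0\notin\sigma(X_s^\aa)$ and the spectral mapping theorem applies, so the eigenvalues of the iteration matrix are $z_\lambda(q)$, obtained by substituting each $\lambda\in\sigma(X_s^\aa)$ for $X_s^\aa$. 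Each $z_\lambda(q)$ is then a genuine scalar rational function of $q$ (no branch points, because $\theta$ is scalar), whose only finite pole stems from $\theta(q)=(1-\xi q)^{-1}$ and sits at $q=1/\xi$. As $\xi>0$, this pole lies in the right half-plane, so every $z_\lambda$ is holomorphic on $\overline{\CC^-}$; moreover $\theta(q)M(q)\to I_s$ at infinity forces $z_\lambda(q)\to0$ there, so each $z_\lambda$ is bounded and continuous up to the imaginary axis.

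Next I would apply the maximum modulus principle for the half-plane: a function holomorphic on $\CC^-$, continuous on the closure and vanishing at infinity attains the supremum of its modulus on the boundary $\{\ii x:x\in\RR\}$. Applied to the finitely many $z_\lambda$, this gives $\sup_{q\in\CC^-}\rho(q)=\max_{x\in\RR}\rho(\ii x)$. Because $X_s^\aa=\P_s^\top\Omega\,\I_s^\aa$ is real, its spectrum is stable under conjugation and a direct check shows $z_{\bar\lambda}(q)=\overline{z_\lambda(\bar q)}$, whence $\rho(\ii x)=\rho(-\ii x)$; together with $\rho(0)=0$ (since $\theta(0)M(0)=I_s$) this reduces the boundary maximum to $\max_{x>0}\rho(\ii x)=\rho^*$.

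Finally I would assemble both implications. For the forward direction, if the iteration is $A$-convergent then $\rho(q)<1$ throughout $\CC^-$, and letting $q\to\ii x$ along $\CC^-$ (using continuity of $\rho$ up to the boundary) yields $\rho(\ii x)\le1$, hence $\rho^*\le1$. For the converse, assume $\rho^*\le1$ and fix $\lambda$: if $z_\lambda$ is nonconstant, the strict form of the maximum modulus principle gives $|z_\lambda(q)|<\max_{x>0}|z_\lambda(\ii x)|\le\rho^*\le1$ for every interior $q$, while if $z_\lambda$ is constant then vanishing at infinity forces $z_\lambda\equiv0$; in either case $|z_\lambda(q)|<1$ on $\CC^-$, so $\rho(q)<1$ there and the iteration is $A$-convergent, hence $L$-convergent. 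The main obstacle is this last, strict, step: one must carefully justify that the supremum over the unbounded domain is attained on the boundary (via the decay at infinity) and then upgrade the boundary bound $\le1$ to the strict interior bound $<1$ by separating the nonconstant and constant cases — the inequality in the theorem being non-strict precisely because it is stated on the boundary, whereas $A$-convergence is a strict interior condition.
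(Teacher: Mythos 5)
Your argument is correct and is exactly the route the paper takes: the paper simply remarks that, since $\theta(q)$ is well defined on $\CC^-$, the statement ``easily follows from the maximum modulus theorem'' (together with the previously noted equivalence of $A$- and $L$-convergence), and your proposal supplies the details of precisely that argument — holomorphy of the eigenvalue functions on the closed left half-plane (the only pole $q=1/\xi$ lying in the right half-plane), decay at infinity, reduction of the boundary maximum to $x>0$ by conjugation symmetry, and the strict maximum modulus principle to pass from the non-strict boundary bound to the strict interior bound required for $A$-convergence.
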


The following result also holds true.

\begin{theorem}\label{eigs} The eigenvalues of the iteration matrix $I_s-\theta(q)M(q)$ are given by:
$$\frac{q(\lambda-\xi)^2}{\lambda(1-q\xi)^2}, \qquad \lambda\in\sigma(X_s^\aa).$$
Consequently, the  maximum amplification factor is given by:
\begin{equation}\label{ros1}
\rho^* = \max_{\lambda\in\sigma(X_s^\aa)} \frac{|\lambda-\xi|^2}{2\xi|\lambda|}.
\end{equation}
\end{theorem}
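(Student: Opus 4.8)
The plan is to exploit the fact that every matrix entering the iteration matrix $I_s-\theta(q)M(q)$ is a (Laurent) polynomial in the single matrix $X_s^\aa$. Indeed, $\theta(q)=(1-\xi q)^{-1}I_s$ is merely a scalar multiple of the identity, while $M(q)$ is assembled from $I_s$, $X_s^\aa$, and $(X_s^\aa)^{-1}$, the latter being well defined since $X_s^\aa$ is nonsingular, as already noted. Hence all these matrices commute and, for fixed $q$, the iteration matrix is a rational function $g(X_s^\aa)$ whose only pole is at the origin; since $0\notin\sigma(X_s^\aa)$, the spectral mapping theorem gives $\sigma\big(I_s-\theta(q)M(q)\big)=\{\,g(\lambda):\lambda\in\sigma(X_s^\aa)\,\}$. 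Thus it suffices to carry out the computation with $X_s^\aa$ replaced by a scalar eigenvalue $\lambda$.

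First I would substitute $\lambda$ for $X_s^\aa$ in the definition of $M(q)$, abbreviate $\theta\equiv\theta(q)=(1-\xi q)^{-1}$, $a:=\lambda^{-1}-q$, $b:=1-q\lambda$, and write the scalar eigenvalue as $1-\theta\big(\xi a+\theta(b-\xi a)\big)=1-\theta\xi a(1-\theta)-\theta^2 b$. The decisive simplification is the identity $1-\theta=-\xi q\,\theta$, immediate from the definition of $\theta$, which lets me combine the two terms proportional to $\xi a$ into the single term $\theta^2\xi^2 q\,a$, reducing the eigenvalue to $1+\theta^2(\xi^2 q\lambda^{-1}-1+q\lambda-\xi^2 q^2)$. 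Putting everything over the common denominator $(1-\xi q)^2$ and expanding $(1-\xi q)^2=1-2\xi q+\xi^2 q^2$, the $q$-free terms and the $q^2$ terms cancel, leaving the numerator $q(\lambda-2\xi+\xi^2/\lambda)=q(\lambda-\xi)^2/\lambda$. This yields exactly $q(\lambda-\xi)^2/[\lambda(1-q\xi)^2]$, proving the first claim. I expect the only delicate point to be the bookkeeping that makes the perfect square $(\lambda-\xi)^2$ emerge, but it is purely algebraic.

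For the amplification factor I would invoke Theorem~\ref{rostar}, so that $\rho^*=\max_{x>0}\rho(\ii x)$. Substituting $q=\ii x$ into the eigenvalue formula and taking moduli gives, for each $\lambda$,
$\left|\dfrac{\ii x(\lambda-\xi)^2}{\lambda(1-\ii\xi x)^2}\right|=\dfrac{x}{1+\xi^2x^2}\cdot\dfrac{|\lambda-\xi|^2}{|\lambda|}$, where I used $|1-\ii\xi x|^2=1+\xi^2x^2$. Since the $x$-dependent factor $x/(1+\xi^2x^2)$ is common to every $\lambda$, the two maximizations decouple: an elementary one-variable argument shows $\max_{x>0}x/(1+\xi^2x^2)=1/(2\xi)$, attained at $x=1/\xi$. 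Swapping the order of the maxima then delivers $\rho^*=\max_{\lambda\in\sigma(X_s^\aa)}|\lambda-\xi|^2/(2\xi|\lambda|)$, which is precisely (\ref{ros1}).
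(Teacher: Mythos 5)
Your proof is correct. The paper itself does not derive the eigenvalue formula: it simply delegates it to \cite[Theorem\,2 and Eq.\,(25)]{BM2002} and only spells out the final maximization, namely that $\max_{q\in\CC^-}$ of the modulus reduces to the imaginary axis (this is exactly Theorem~\ref{rostar}) and is attained at $x=\xi^{-1}$, giving (\ref{ros1}). You instead give a self-contained derivation of the spectrum: you observe that $\theta(q)$ is scalar and $M(q)$ is a rational function of the single nonsingular matrix $X_s^\aa$, so the spectral mapping theorem reduces everything to a scalar computation, and your algebra (using $1-\theta=-\xi q\,\theta$ to produce the perfect square $(\lambda-\xi)^2$) checks out, as does the elementary maximization of $x/(1+\xi^2x^2)$ at $x=1/\xi$ and the interchange of the two maxima, which is legitimate because the $x$-dependence factors out uniformly in $\lambda$. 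What your route buys is independence from the external reference, at the cost of a page of bookkeeping; what the paper's route buys is brevity, since the identical computation was already carried out in \cite{BM2002} for the case $\aa=1$ and carries over verbatim once $X_s^\aa$ replaces the corresponding matrix there. The second halves of the two arguments coincide.
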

\begin{proof} See \cite[Theorem\,2 and Equation\,(25)]{BM2002}, by considering that
$$\max_{q\in\CC^-} \frac{q(\lambda-\xi)^2}{\lambda(1-q\xi)^2} ~\equiv~ \max_{x>0}\frac{x|\lambda-\xi|^2}{|\lambda|(1+x^2\xi^2)}$$
is obtained at $x=\xi^{-1}$, so that (\ref{ros1}) follows.\,\qed\end{proof}

\bigskip
Slightly generalizing the arguments in \cite{BM2002}, we then consider the following choice of the parameter $\xi$,
\begin{equation}\label{xi}
\xi = \mathrm{argmin}_{\mu\in\sigma(X_s^\aa)} \max_{\lambda\in\sigma(X_s^\aa)} \frac{|\lambda-|\mu||^2}{2|\mu||\lambda|},
\end{equation}
which is computed once forall, and always provides, in our experiments, an $L$-convergent iteration. In particular, the code {\tt fhbvm} uses, at the moment, $k=22$ and $s=20$: the corresponding maximum amplification factor (\ref{ros1}) is depicted in Figure~\ref{ros1-alfa}, w.r.t. the order $\alpha$ of the fractional derivative, thus confirming this.

\begin{figure}[t]
\centering
\includegraphics[width=9cm]{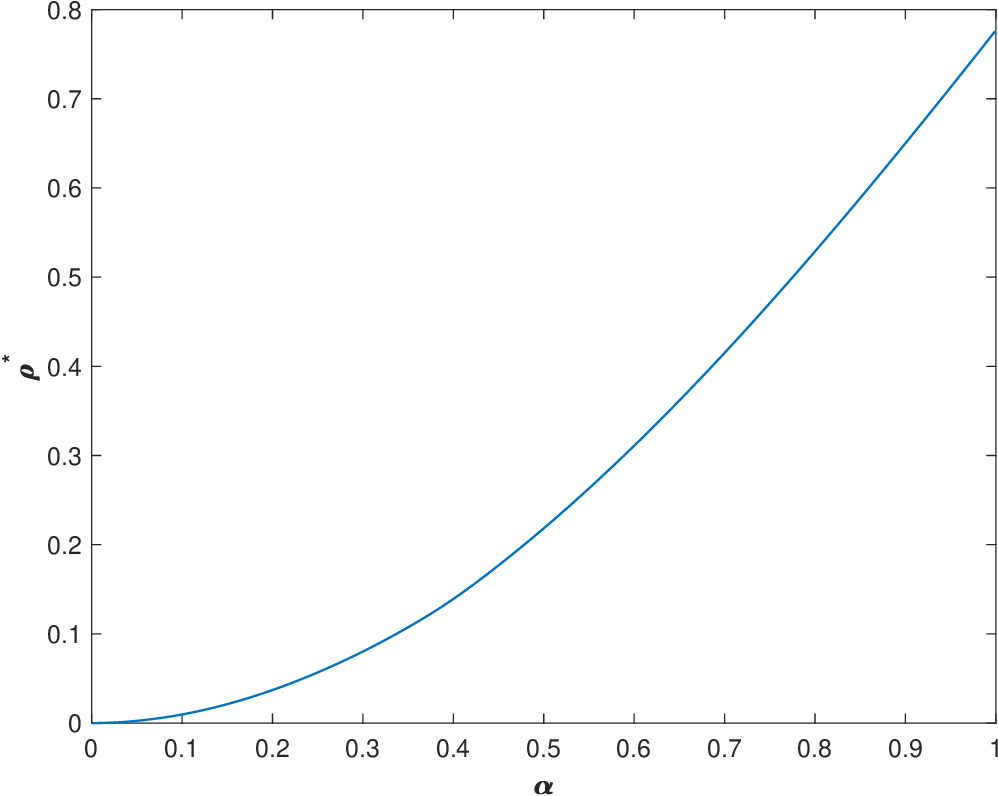}
\caption{Maximum amplification factor (\ref{ros1})--(\ref{xi}), $k=22$ and $s=20$.}
\label{ros1-alfa}
\end{figure}

Coming back to the original problem (\ref{simpnewt}), starting from the initial guess $\Delta\bfgamma=\bfzero$, and updating the r.h.s. as soon as a new approximation to $\bfgamma$ is available, one has that the iteration (\ref{etaq})--(\ref{blendi}) simplifies to:

\begin{eqnarray}\nonumber
\bfeta^\ell &=& -G(\bfgamma^\ell)\\[1mm] \label{blendfin}
\bfeta_1^\ell &=& \xi (X_s^\aa)^{-1}\otimes I_m\, \bfeta^\ell\\[1mm] \nonumber
\bfgamma^{\ell+1} &=& \bfgamma^\ell +I_s\otimes \Theta\left[ \bfeta_1^\ell + I_s\otimes \Theta\left( \bfeta^\ell-\bfeta_1^\ell\right)\right], \qquad \ell=0,1,\dots,
\end{eqnarray}
with $\xi$ chosen according to (\ref{xi}), and
$$
\Theta= \left(I_m- h^\aa\xi f_0'\right)^{-1}.
$$
Consequently, only the factorization of one matrix, having the same dimension $m$ of the problem, is needed. Moreover, the initial guess $\bfgamma^0=\bfzero$ can be conveniently considered in (\ref{blendfin}).

\begin{remark} It is worth mentioning that, due to the properties of the Kronecker product, the iteration (\ref{blendfin}) can be compactly cast in matrix form, thus avoiding an explicit use of the Kronecker product. This implementation has been considered in the code {\tt fhbvm} used in the numerical tests.
\end{remark}

Actually, according to Theorem\,\ref{fpit},  in the code {\tt fhbvm} we automatically switch between the fixed-point iteration (\ref{vformell}) or the blended iteration (\ref{blendfin}), depending on the fact that
$$h^\aa \|f_0'\|\|\P_s^\top\Omega\|\|\I_s^\aa\|\le tol,$$
with $tol<1$ a suitable tolerance.

\section{Numerical Tests}\label{num}

In this section we report a few numerical tests using the Matlab\cpr code {\tt fhbvm}: the code implements a FHBVM(22,20) method using all the strategies discussed in the previous section. The calling sequence of the code is:

\smallskip
\centerline{\tt [t,y,stats,err] = fhbvm( fun, y0, T, M )}
\smallskip

\no where:
\begin{description}
\item{In input:}
\smallskip
\begin{itemize}
\item {\tt fun} is the identifier (or the function handling) of the function evaluating the r.h.s. of the equation (also in vector mode), its Jacobian, and the order $\aa$ of the fractional derivative (see {\tt help fhbvm} for more details);
\item {\tt y0} is the initial condition;
\item {\tt T} is the final integration time;
\item {\tt M} is the parameter in (\ref{hmax}) (it should be as small as possible);
\end{itemize}

\newpage
\medskip
\item{In output:}
\smallskip
\begin{itemize}
\item {\tt t,y} contain the computed mesh and solution;
\item {\tt stats} (optional) is a vector containing the following time statistics:
\begin{enumerate}
\item the pre-processing time for computing the parameters $h_1$, $r$, and $N$ (see Section~\ref{graduni}) and the fractional itegrals (\ref{Ijalfa1}), and (\ref{Jjalfa1}) or (\ref{Jjalfa2});
\item the time for solving the problem;
\item the pre-processing time for computing the fractional itegrals (\ref{Ijalfa1}), and (\ref{Jjalfa1}) or (\ref{Jjalfa2}) for the error estimation;
\item the time for solving the problem on the doubled mesh, for the error estimation;
\end{enumerate}
\item {\tt err} (optional), if specified, contains the estimate of the absolute error. This estimate, obtained on a doubled mesh, is relatively costly: for this reason, when the parameter is not specified, the solution on the doubled mesh is not computed.
\end{itemize}
\end{description}

For the first two problems, we shall also make a comparison with the Matlab\cpr code {\tt flmm2} \cite{Ga2015},\footnote{In particular, the BDF2 method is selected ({\tt method=3}), with the parameters {\tt tol=1e-15} and {\tt itmax=1000}.} in order to emphasize the potentialities of the new code. All numerical tests have been done on a M2-Silicon based computer with 16GB of shared memory, using Matlab\cpr R2023b.

The comparisons will be done by using a so called {\em Work Precision Diagram (WPD)}, where the execution time (in {\tt sec}) is plotted against accuracy. The accuracy, in turn, is measured through the {\em mixed error significant computed digits} ({\tt mescd}) \cite{testset}, defined, by using the same notation seen in (\ref{erro}), as\,\footnote{This definition corresponds to set {\tt atol=rtol} in the definition used in \cite{testset}.}
$${\tt mescd} := -\log_{10} \max_{i=0,\dots,N} \| (y(t_i)-\bar y_i)./(1+|y(t_i)|)\|_\infty,$$
being $t_i$, $i=0,\dots,N$, the computational mesh of the considered solver, and $y(t_i)$ and $\bar y_i$ the corresponding values of the solution and of its approximation.

\subsection{Example~1} 
The first problem \cite{Ga2018} is given by:
\begin{eqnarray}\nonumber
y^{(\aa)} &=& -|y|^{1.5} +\frac{8!}{\Gamma(9-\aa) }t^{8-\aa} -
           3\frac{\Gamma(5+\aa/2)}{\Gamma(5-\aa/2)}t^{4-\aa/2} +
           \left( \frac{3}2t^{\aa/2} - t^4 \right)^3  \\[2mm] \label{ex1} &&~+ \frac{9}4\Gamma(\aa+1), \qquad t\in[0,1], \qquad y(0)=0,
\end{eqnarray}
whose solution is
$$ 
y(t) = t^8-3\,t^{4+\aa/2}+\frac{9}4\,t^{\aa}.
$$ 
We consider the value $\aa=0.3$, and use the codes with the following parameters, to derive the corresponding WPD:
\begin{itemize}
\item {\tt flmm2}\,:\, $h=10^{-1}2^{-\nu}$, $\nu=1,\dots,20$; 

\smallskip
\item {\tt fhbvm}\,:\, $M=2,3,4,5$.
\end{itemize}
Figure~\ref{prob2} contains the obtained results: as one may see, {\tt flmm2} reaches less than 12 {\tt mescd}, since by continuing reducing the stepsize, at the 15-th mesh doubling the error starts increasing. On the other hand, the execution time essentially doubles at each new mesh doubling. 
Conversely, {\tt fhbvm} can achieve full machine accuracy by employing a uniform mesh with stepsize $h_1=1/M$, with $M$ very small, thus using very few mesh points. As a result, {\tt fhbvm} requires very short execution times.

\begin{figure}[t]
\centering
\includegraphics[width=9cm]{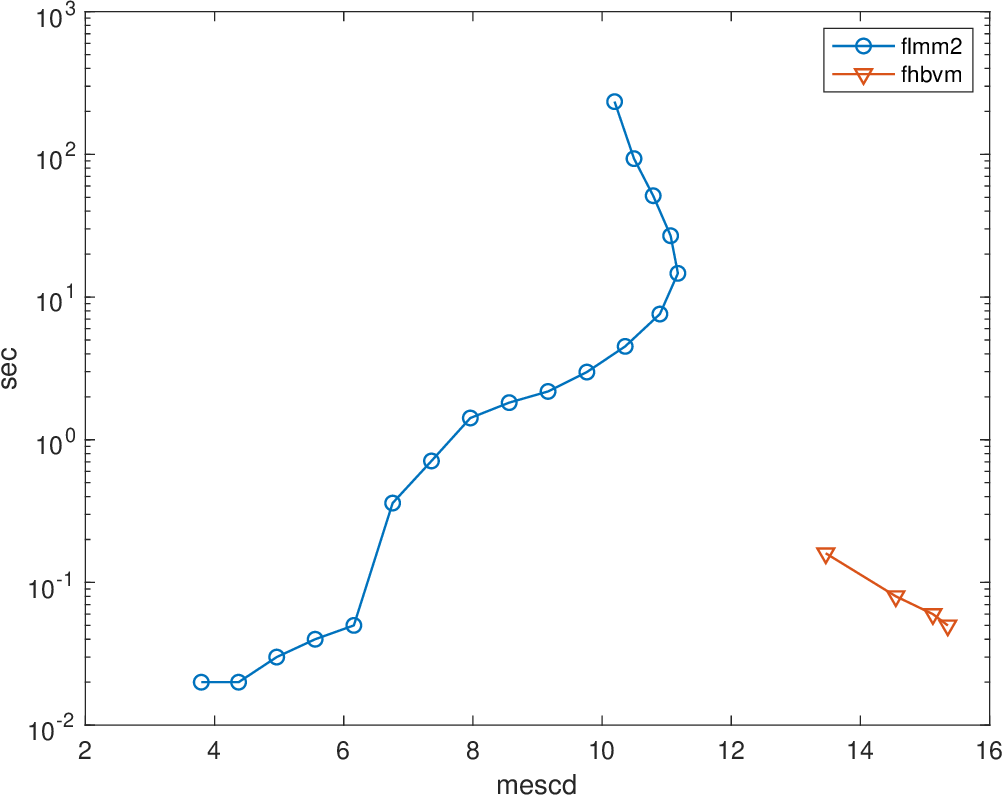}
\caption{Work-precision diagram for problem (\ref{ex1}), $\alpha=0.3$.}
\label{prob2}
\end{figure}

\subsection{Example~2} 
We now consider the following linear problem:
\begin{eqnarray}\nonumber 
y^{(0.5)} &=& \pmatrix{rr}-50 & ~0 \\ -49 &~-1\endpmatrix y, \qquad t\in[0,20],\\[2mm]
y(0)        &=& (2,\, 3)^\top, \label{ex2}
\end{eqnarray}
having solution
$$y(t) = \pmatrix{c} 2\,E_{0.5}\left(-50\cdot t^{0.5}\right)\\[1mm]
2\,E_{0.5}\left(-50\cdot t^{0.5}\right)+E_{0.5}\left(- t^{0.5}\right)\endpmatrix,$$
with $E_{0.5}$ the Mittag-Leffler function.\footnote{We have used the Matlab\cpr function {\tt ml} \cite{Ga2015ml} for its evaluation.} We use the codes with the following parameters, to derive the corresponding WPD:
\begin{itemize}
\item {\tt flmm2}\,:\, $h=10^{-1}2^{-\nu}$, $\nu=1,\dots,20$; 

\smallskip
\item {\tt fhbvm}\,:\, $M=5,\dots,10$.
\end{itemize}
Figure~\ref{sistema} contains the obtained results, from which one deduces that {\tt flmm2} achieves about 5 {\tt mescd} (with a run time of about 85 {\tt sec}), whereas {\tt fhbvm} has approximately 13 {\tt mescd}, with an execution time of about 1 {\tt sec}. Further, in Figure~\ref{error}, we plot the true and estimated (absolute) errors for {\tt fhbvm} in the case $M=10$ (corresponding to a computational mesh made up of 251 mesh-points, with an initial stepsize $h_1\approx 7.3\cdot 10^{-12}$, and a final stepsize $h_{250}\approx 2$): as one may see from the figure, there is a substantial agreement between the two errors.

\begin{figure}[t]
\centering
\includegraphics[width=9cm]{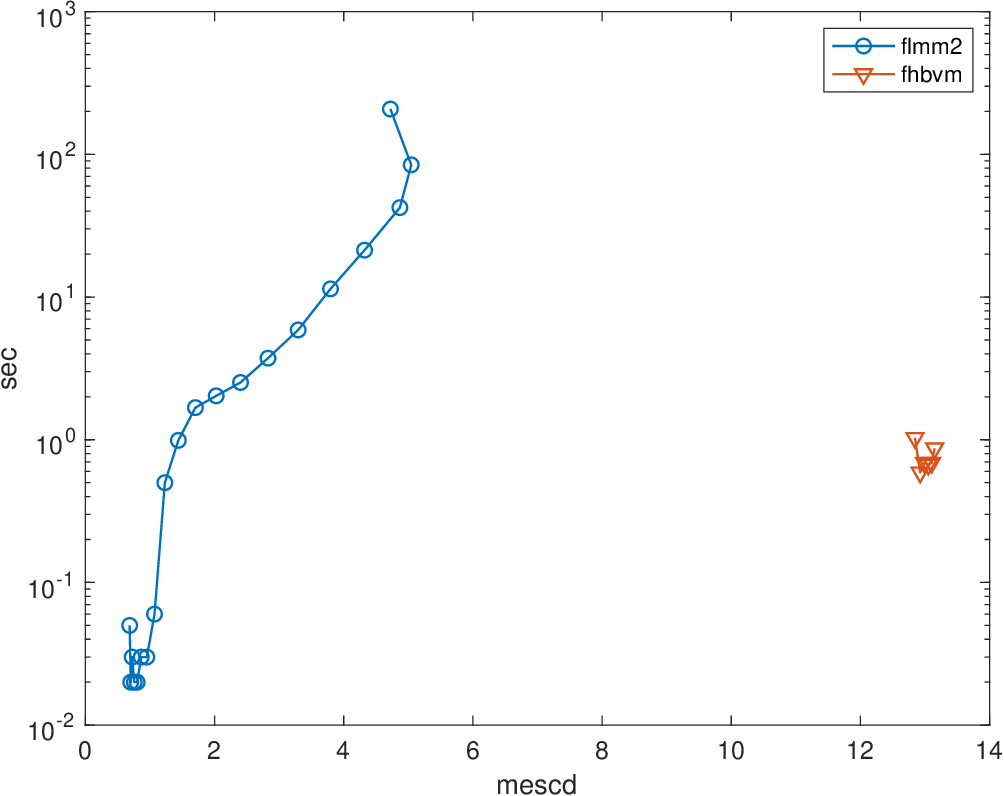}
\caption{Work-precision diagram for problem (\ref{ex2}).}
\label{sistema}

\bigskip
\includegraphics[width=9cm]{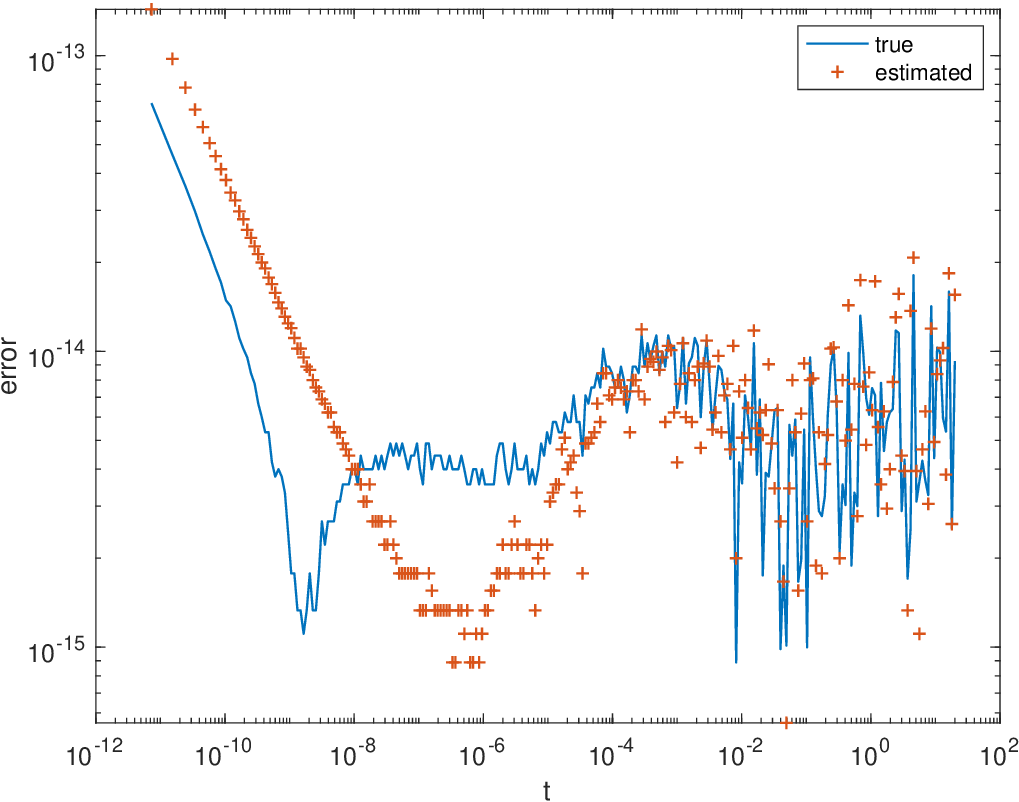}
\caption{True and estimated absolute errors for {\tt fhbvm} solving problem (\ref{ex2}), $M=10$.}
\label{error}
\end{figure}

\subsection{Example~3}  
We now consider the following nonlinear problem \cite{BBBI2023}:
\begin{eqnarray}\nonumber 
y_1^{(1/3)}(t) &=& \frac{t}{10}\left(y_1^3 - \left(\sqrt{y_2}+1\right)^3\right) +  \frac{\Gamma(5/3)}{\Gamma(4/3)}t^{1/3},\\
\nonumber
y_2^{(1/3)}(t)  &=& \frac{1}3\left( y_2^3 - ( y_1-1)^6 \right) + \Gamma(7/3)t,\qquad t\in[0,1],\\[1mm]
y(0)        &=& (1,\, 0)^\top, \label{ex3}
\end{eqnarray}
having solution
$$y(t) = \pmatrix{c} t^{2/3}+1 \\[1mm] t^{4/3}\endpmatrix.$$
This problem is relatively simple and, in fact, both {\tt flmm2} and {\tt fhbvm} solve it accurately. We use it to show the estimated error by using {\tt fhbvm} with parameter $M=2$, which produces a graded mesh with 41 mesh-points,
with $h_1\approx 1.8\cdot 10^{-12}$ and $h_{40}\approx 0.49$. The absolute errors (true and estimated) for each component are depicted in Figure~\ref{sistema1}, showing a perfect agreement for both of them.

In this case, the evaluation of the solution requires  $\approx 0.04$  {\tt sec}, and the error estimation requires   $\approx0.11$ {\tt sec}.

\begin{figure}
\centering
\includegraphics[width=9cm]{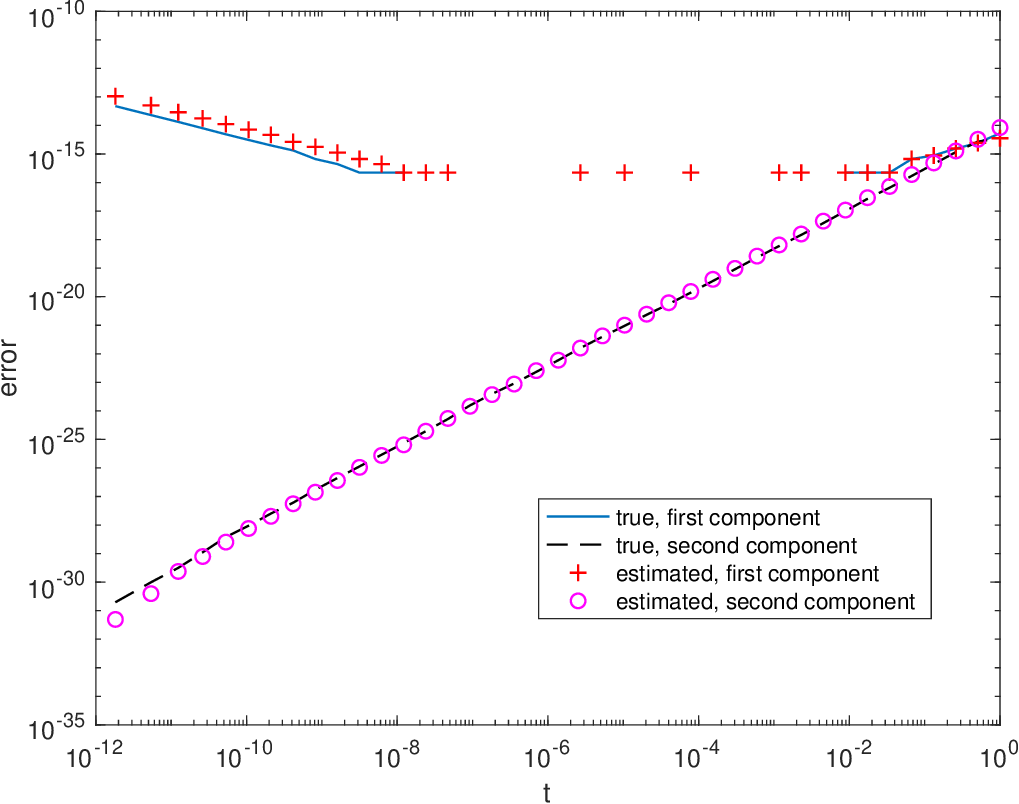}
\caption{True and estimated absolute errors for {\tt fhbvm} solving problem (\ref{ex3}), $M=2$.}
\label{sistema1}
\end{figure}

\subsection{Example~4}  

At last, we consider the following fractional Brusselator model:
\begin{eqnarray}\nonumber 
y_1^{(0.7)} &=& 1-4y_1+y_1^2y_2,\\[2mm] \nonumber
y_2^{(0.7)} &=& 3y_1-y_1^2y_2, \qquad t\in[0,5],\\[2mm]
y(0) &=& (1.2,\,2.8)^\top, \label{ex4}
\end{eqnarray}
By solving this problem using {\tt fhbvm} with parameter $M=5$, a graded mesh of 46 points is produced, with $h_1\approx 6.1\cdot 10^{-5}$ and $h_{45}\approx 0.98$. The maximum estimated error in the computed solution is less than $3.5\cdot 10^{-13}$, whereas the phase-plot of the solution is depicted in Figure~\ref{sistema2}.

In this case, the evaluation of the solution requires  $\approx 0.04$  {\tt sec}, and the error estimation requires   $\approx0.14$ {\tt sec}.

\begin{figure}
\centering
\includegraphics[width=7cm]{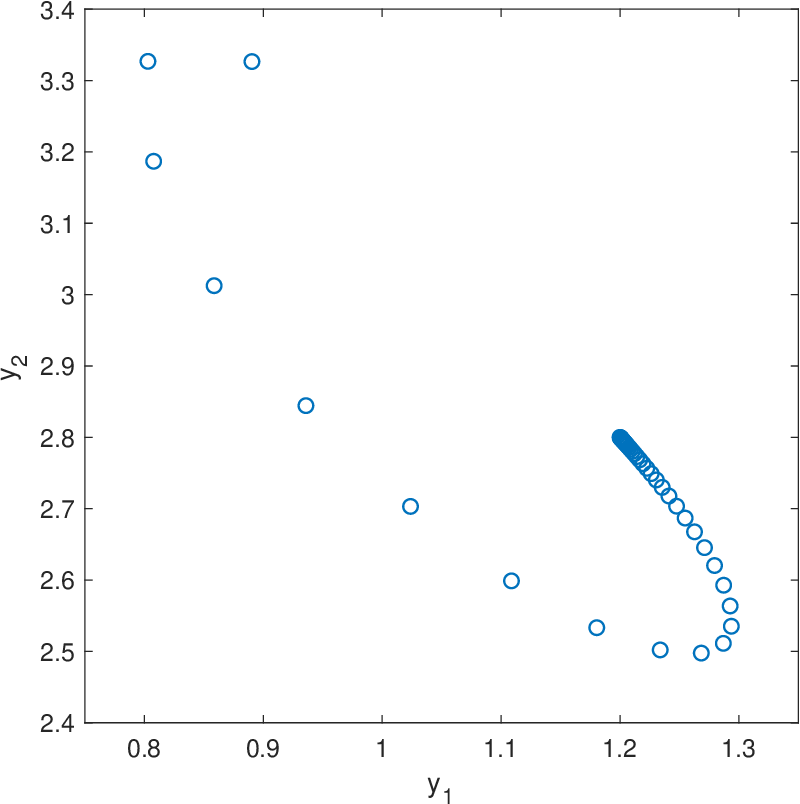}
\caption{Phase-plot of the computed solution by using {\tt fhbvm} solving problem (\ref{ex4}), $M=5$.}
\label{sistema2}
\end{figure}

\section{Conclusions}\label{fine}

In this paper we have described in full details the implementation of the Matlab\cpr\,   code {\tt fhbvm}, able to solving systems of FDE-IVPs.  The code is based on a FHBVM(22,20) method, as described in \cite{BBBI2023}. We have also provided comparisons with another existing Matlab\cpr code, thus confirming its potentialities. In fact, due to the spectral accuracy in time of the FHBVM(22,20) method, the generated computational mesh, which can be either a uniform or a graded one, depending on the problem at hand, requires relatively few mesh points. This, in turn, allows to reduce the execution time due to the evaluation of the memory term required at each step. 

We plan to further develop the code {\tt fhbvm}, in order to provide approximations at prescribed mesh points, as well as to allow selecting different FHBVM$(k,s)$, $k\ge s$, methods \cite{BBBI2023}. At last, we plan to extend the code to cope with values of the fractional derivative, $\aa$, greater than 1.

\paragraph*{\bf Declarations.}\quad The authors declare no conflict of interests, nor competing interests. No grants were received for conducting this study.

\paragraph*{\bf Data availability.}\quad All data reported in the manuscript have been obtained by the Matlab$^\copyright$ code  {\tt fhbvm}, Rel. 2024-03-06, available at the url \cite{fhbvm}.

\paragraph*{\bf Acknowledgements.}\quad The authors are members of the Gruppo Nazionale Calcolo Scientifico-Istituto Nazionale di Alta Matematica (GNCS-INdAM).

\end{document}